\nonstopmode \numberwithin{equation}{section}
\newcommand{\ubrace}[2]{\mathord{\mathpalette\ubrace@{{#1}{#2}}}}
\newcommand{\ubrace@}[2]{\ubrace@@#1#2}
\newcommand{\ubrace@@}[3]{
	\underbrace{#1#2}_{#3}%
}
\newtheorem{Th}{Theorem}[subsection]
\theoremstyle{definition}
\newtheorem{defn}[Th]{Definition}
\newtheorem{Thm}{Theorem}[section]
\newtheorem{Cor}[Thm]{Corollary}
\newtheorem{Lem}[Thm]{Lemma}
\newtheorem{Prop}[Thm]{Proposition}
\theoremstyle{definition}
\newtheorem{Rem}[Thm]{Remark}
\newtheorem{Ex}[Thm]{Example}
\begin{document}
	\bibliographystyle{amsplain}
	\title{A Combinatorial Criterion and Center for the quasi-isometry groups of Euclidean spaces }
	\author{Swarup Bhowmik}
	\address{Swarup Bhowmik, Department of Mathematics,
		Indian Institute of Technology Kharagpur, Kharagpur - 721302, India.}
	\email{swarup.bhowmik@iitkgp.ac.in}
	\author{Prateep Chakraborty}
	\address{Prateep Chakraborty, Department of Mathematics,
		Indian Institute of Technology Kharagpur, Kharagpur - 721302, India.}
	\email{prateep@maths.iitkgp.ac.in}

	\begin{abstract}
		In this study, we introduce the notion of $PL_\delta$-homeomorphisms of $\mathbb{R}^n$. Furthermore, we provide a combinatorial criterion reliant on the vertices and edges of simplicial structures, to determine whether a piecewise-linear homeomorphism to be a quasi-isometry. By employing this criterion, we subsequently show that the center of the group $QI(\mathbb{R}^n)$, which comprises all quasi-isometries of $\mathbb{R}^n$, is indeed trivial.
	\end{abstract}
	\thanks{2020 \textit{Mathematics Subject Classification.} 20F65, 55U10}
	\thanks{\textit{Key words and phrases.} PL-homeomorphism groups, quasi-isometries of Euclidean spaces, locally finite simplicial complex.}
	\thanks{The first author of this article acknowledges the financial support from Inspire, DST, Govt. of India as a Ph.D. student (Inspire) sanction
		letter number: No. DST/INSPIRE Fellowship/2018/IF180972}
	\maketitle
	\pagestyle{myheadings}
	\markboth{S. Bhowmik and P. Chakraborty}{A Combinatorial Criterion and Center for the quasi-isometry groups of Euclidean spaces }
	\bigskip
	\section{Introduction}
	Consider a metric space denoted as $X$. The group comprising quasi-isometries from $X$ to itself is formally represented as $QI(X)$ and stands as a quasi-isometric invariant of $X$. Several studies have explicitly examined $QI(X)$ within a specific categories of metric spaces and finitely generated groups with word metric. Notable examples include investigations involving irreducible lattices in semisimple Lie groups \cite{Farb}, solvable Baumslag-Solitar groups $BS(1, n)$ \cite[Theorem 7.1]{Farb Mosher}, as well as the groups $BS(m, n)$ where $1 < m < n$ \cite[Theorem 4.3]{Whyte}.
	
	The exploration of the group of quasi-isometries of Euclidean spaces remains relatively limited. Nevertheless, Mitra and Sankaran \cite{Mitra Sankaran} made notable observations, revealing that $QI(\mathbb{R}^n)$ constitutes a sizable group, containing various recognized subgroups of the homeomorphism group of $\mathbb{S}^{n-1}$, for example, $Bilip(\mathbb{S}^{n-1})$, $Diff^{r}(\mathbb{S}^{n-1})$, $PL(\mathbb{S}^{n-1})$ and others like $Diff^{r}(\mathbb{D}^{n}, \mathbb{S}^{n-1})$ and $Diff^{r}_{k}(\mathbb{R}^{n})$. Notably, Gromov and Pansu \cite[\textsection 3.3.B]{Gromov} demonstrated that for $n = 1$, $QI(\mathbb{Z}) (\cong QI(\mathbb{R})$) emerges as an infinite-dimensional group. It has been established that $QI(\mathbb{R})$ contains the free group with a rank equal to the continuum and a copy of the group comprising all compactly supported diffeomorphisms of $\mathbb{R}$ \cite{Sankaran}, with its center being trivial  \cite{Chakraborty}. However, additional research is required to unveil the full extent of the structure and behavior of $QI(\mathbb{R}^n)$.
	
	In \cite{Sankaran}, an explicit description of any quasi-isometries of the real line was presented, employing piecewise-linear homeomorphisms with specific conditions on their slopes. In this research article, we extend this study by providing combinatorial criterion that determine when a piecewise-linear homeomorphism of $\mathbb{R}^n$ qualifies as a quasi-isometry. To achieve this, we establish conditions related to vertices and edges of a triangulation of $\mathbb{R}^n$ that are sufficient to determine whether a piecewise-linear homeomorphism of $\mathbb{R}^n$ is a quasi-isometry.

	In Section 2, we introduce the definition of a piecewise-linear homeomorphism considering a general simplicial complex embedded within a Euclidean space with simplices satisfy the condition of convexity. In this paper, we shall denote the Euclidean metric as $d$ consistently throughout the discussions.
	
	Now, we consider two equivalent locally finite simplicial complexes (l.f.s.c. in abbreviation) $\mathcal{K}_1$ and $\mathcal{K}_2$ of $\mathbb{R}^{n}$ (the definitions are given in \textsection\ref{PL-homeomorphisms}). We denote the angles  by $\theta_{ijk}^{\Delta}$ between two adjacent edges $v_i,v_j$ and $v_k$ of a simplex $\Delta\in\mathcal{K}_1$ after fixing an ordering of the vertices of $\Delta$ for $0\leq i<k\leq n,~j\neq i,~j\neq k$. We define the set $A=\left\{(\cos\theta_{ijk}^{\Delta})_{i,j}\subset\displaystyle\prod_{\substack{0\leq i<k\leq n\\j\neq i,k}}(-1,1):~\Delta\in\mathcal{K}_1\right\}$ for the l.f.s.c. $\mathcal{K}_1$. We similarly define the set $B$ for the l.f.s.c. $\mathcal{K}_2.$ The sets $A$ and $B$ contain all the informations about the angles between two adjacent edges of the simplices of $\mathcal{K}_1$ and $\mathcal{K}_2$. The next result gives a sufficient condition for a simplicial isomorphism to be a quasi-isometry in terms of the distance of the vertices of any simplex and the sets $A$ and $B$.
	\begin{Thm}\label{1.1}
		\textit{Let $g\in PL(\mathbb{R}^n)$ and $g=|f|$ where $f:\mathcal{K}_1\rightarrow\mathcal{K}_2$ ($\mathcal{K}_1$ and $\mathcal{K}_2$ are equivalent simplicial structures of $\mathbb{R}^n$) is a simplicial isomorphism. If
		\begin{enumerate}
		\item [(i)] there exists $\mu>1$ such that $\displaystyle\frac{1}{\mu}
		\leq\displaystyle\frac{d(f(v_i),f(v_j))}{d(v_i,v_j)}
		\leq\mu$, where $v_i,v_j$ are any pair of vertices of any simplex of  $\mathcal{K}_1$ and $i\neq j$ and
		\item [(ii)] $A$ and $B$ are closed subsets of $\left\{\displaystyle\prod_{\substack{0\leq i<k\leq n\\j\neq i,k}}\mathbb{R}\right\}$ for some ordering of the vertices of simplices of $\mathcal{K}_1$ and $\mathcal{K}_2$,
		\end{enumerate}
		then $g$ is a quasi-isometry of $\mathbb{R}^n$ and $g$ is quasi-isometrically equivalent to the identity map if and only if $d(f(v_i),v_i)<C$ for some $C>0$ and for all $v_i\in \mathcal{K}_1$.}
	\end{Thm}
	Exploring the algebraic properties of $QI(\mathbb{R}^n)$ represents an alternative direction of investigation. A fundamental preliminary aspect in this pursuit is to identify the center of  $QI(\mathbb{R}^n)$.
	\begin{Thm}\label{1.2}
	  \textit{The center of the group $QI(\mathbb{R}^n)$ is trivial.}
	\end{Thm}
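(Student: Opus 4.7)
The plan is to fix a central class $[f]\in Z(QI(\mathbb{R}^n))$ with a representative $f$ and show $[f]=[\mathrm{id}]$. After composing with the translation $T_{-f(0)}$, which lies in the identity class (every translation is at bounded distance from $\mathrm{id}$), we may assume $f(0)=0$, so the task becomes $\sup_{x}|f(x)-x|<\infty$.

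Centrality of $[f]$ with respect to the isometry group of $\mathbb{R}^n$ (each isometry being a quasi-isometry) yields the first batch of constraints: for every translation $T_v$, a constant $C_v$ with $|f(x+v)-f(x)-v|\le C_v$, whence setting $x=0$ gives the pointwise estimate $|f(v)-v|\le C_v$; for every orthogonal $O\in O(n)$, a constant $C_O$ with $|f(Ox)-Of(x)|\le C_O$. The linear dilations $D_\lambda$ ($\lambda>0$) are also quasi-isometries, so centrality gives $|f(\lambda x)-\lambda f(x)|\le C_\lambda$. Together these express that $f$ is approximately translation-equivariant, orthogonally-equivariant, and positively homogeneous of degree $1$ — the heuristic conclusion being that $f$ is close to a scalar multiple $c\cdot\mathrm{id}$.

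The hard part is converting this into the sharp statement that $\sup_v|f(v)-v|<\infty$. Centrality against isometries alone does not suffice: the perturbation $f(x)=x+\log(1+|x|)\hat{x}$ quasi-isometrically commutes with every isometry yet is unbounded from $\mathrm{id}$. This is where Theorem \ref{1.1} enters, since it provides non-isometric piecewise-linear quasi-isometries of $\mathbb{R}^n$ via an easily verifiable local criterion. I would use Theorem \ref{1.1} to build explicit simplicial homeomorphisms — for instance, maps that act as $\mathrm{id}$ on one closed half-space and as a genuine dilation on the complementary half-space, glued along the bounding hyperplane through a triangulation satisfying the edge-ratio and angle hypotheses. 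Commutativity of $[f]$ with such a $g$ produces inequalities coupling the displacement $f-\mathrm{id}$ across the two regions; combined with the dilation constraint, these are designed to rule out all scalars $c\neq 1$ and force the uniform bound $\sup_v C_v<\infty$, hence $[f]=[\mathrm{id}]$.

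The principal obstacle is this final extraction: one must engineer the PL test maps so that both hypotheses of Theorem \ref{1.1} are verified, and then convert the (not-uniform-in-$g$) commutator bounds $|fg-gf|\le C_g$ into a single uniform bound on $|f(v)-v|$. The delicate point is that individual $C_g$'s are finite for each $g$ but can grow with $g$, so one must choose a family of test quasi-isometries whose constraints combine cleanly across all scales and directions before one can conclude triviality of the center.
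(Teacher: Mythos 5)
There is a genuine gap, and you have in fact located it yourself: the entire content of the theorem is concentrated in the step you defer to the end, namely converting the family of non-uniform commutator bounds $d(fg(x),gf(x))\le C_g$ into the single uniform bound $\sup_v d(f(v),v)<\infty$. Your preliminary observations (approximate equivariance under translations, $O(n)$, and dilations) are correct but, as your own logarithmic example shows, nowhere near sufficient; and the proposal never specifies which PL test maps are used, how their hypotheses in Theorem \ref{1.1} are checked, or --- crucially --- how the constants $C_g$ are prevented from absorbing the unbounded displacement of $f$. ``These are designed to rule out all scalars $c\neq 1$'' is a statement of intent, not an argument. As written, the proof stops exactly where it would have to begin.

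It is worth noting that the paper sidesteps the uniformity problem entirely by arguing in the contrapositive direction and, more importantly, by building the test map $g$ \emph{adapted to the given} $f$ rather than drawing it from a universal family. Given $[f]\neq[\mathrm{id}]$, one extracts a sequence $a_m$ with $d(f(a_m),a_m)\to\infty$ and then splits into two cases. If some subsequence $\{b_m\}$ can be separated from $\{f(b_m)\}$ by an infinite cone, $g$ is taken to be the identity on a cone containing the $b_m$ and $x\mapsto 2x$ outside a larger cone containing the $f(b_m)$, interpolated in between (this is where Theorem \ref{1.1} and the explicit estimates (A)--(C) are used); then $d(gf(b_m),fg(b_m))=\|f(b_m)\|\to\infty$. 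Otherwise, $g$ is a PL bump supported on a family of pairwise disjoint discs of radii $r_m\to\infty$ centered at the images $f(d'_m)$ and chosen to avoid the points $d'_m$ themselves, so that $g$ fixes each $d'_m$ but displaces $f(d'_m)$ by $r_m/4\to\infty$. Because $g$ is tailored to the divergent displacement sequence of $f$, a single non-uniform failure of commutation suffices, and no uniform-in-$g$ bookkeeping is ever needed. If you want to salvage your approach, you would need to supply precisely the mechanism that this adaptation provides for free.
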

	 In Section 2, we introduce the notions for two essential components: $PL(X)$, $PL_\delta(X)$ and we define the length metric for an l.f.s.c. space $X$ that is embedded in an $n$-dimensional Euclidean space $\mathbb{R}^n$. In Section 3, we establish a crucial connection between $PL_\delta(X)$, $Bilip(X)$, and $QI(X)$, which yields a generalized version of Theorem \ref{1.1} in the form of Corollary \ref{3.6}. Notably, Corollary \ref{3.5} and Corollary \ref{3.6} demonstrate a link between combinatorics, topology, and geometry. In Section 4, we use Corollary \ref{3.6} as a tool to prove Theorem \ref{1.2}.
	\section{Preliminaries}
	In this section, at first we describe the concept of quasi-isometry. Then we discuss about the l.f.s.c. and its equivalence. Subsequently, we define the set of piecewise-linear homeomorphisms with respect to a fixed l.f.s.c., which turns into a group due to the equivalence of the structure. Then by introducing $PL(X)$, we have captured the entirety of possible piecewise-linear homeomorphisms for a given space $X$. In parallel, we introduce $PL_\delta(X).$

	We begin by recalling the notion of quasi-isometry. Let $f:(X,d_1)\rightarrow (X',d_2)$ be a map (which is not assumed to be continuous) between two metric spaces. We say that $f$ is a quasi-isometric embedding if there exists an $M>1$ such that
	\begin{center}
		$\displaystyle\frac {1}{M}d_1(x_1,x_2)-M\leq d_2(f(x_1),f(x_2))\leq Md_1(x_1,x_2)+M$
	\end{center}
	for all $x_1,x_2\in X$. In addition to this, if there exists $M'>0$ such that for any $y\in X',$ there is an $x\in X$ such that $d_2(f(x),y)<M',$ then $f$ is said to be a quasi-isometry. If $f$ is a quasi-isometry, then there exists a quasi-isometry $f':X'\rightarrow X$ such that $f'\circ f$ (resp. $f\circ f'$) is quasi-isometrically equivalent to the identity map of $X$ (resp. $X'$). (Two maps $f,g:X\rightarrow X$ are said to be quasi-isometrically equivalent if there exists a constant $\alpha>0$ such that $d_1(f(x),g(x))<\alpha$, for all $x\in X$ ). Let [$f$] denote the equivalence class of a quasi-isometry $f:X\rightarrow X$. We denote the set of all equivalence classes of quasi-isometries of $X$ by ${QI(X)}$. This set forms a group under the composition [$f$]$\circ$[$g$]=[$f\circ g$] for $[f],[g]\in QI(X)$. If $f:X\rightarrow X'$ is a quasi-isometry, then $QI(X)$ is isomorphic to $QI(X')$ as a group.
	
	\vspace*{2mm}
	Recall that a homeomorphism $f:(X,d)\rightarrow (X,d)$ is \textit{bi-Lipschitz} if there exists
	a constant $\lambda\geq 1$ such that $\displaystyle\frac {1}{\lambda}d(x, y)\leq d(f(x),f(y))\leq \lambda d(x,y).$ We denote the set of all bi-Lipschitz homeomorphisms from $X$ to itself as  $Bilip(X)$. It is evident that if $f\in  Bilip(X)$, then $f$ is a quasi-isometry of $X$.
	
	\vspace*{2mm}
	Now, we proceed to introduce the notion of $PL_\delta$- homeomorphisms of a path-connected locally finite simplicial complex $X\subset\mathbb{R}^n$. This notion serves as a generalization of $PL_\delta(\mathbb{R})$ defined in \cite{Sankaran}. We refer the reader to \cite{Hudson} for basic concepts of a simplicial complex.
	\subsection{\textit{PL-homeomorphisms of $X$}}\label{PL-homeomorphisms}
	\begin{defn}
		A $r$-simplex in $\mathbb{R}^{n}$ is the convex hull of $(r+1)$ linearly independent points called its vertices. Each face of an $r$-simplex is the convex span of some of the vertices and therefore is an $m$-simplex where $m\leq r$. For two simplices $\sigma$ and $\tau$, we write $\sigma<\tau$ if $\sigma$ is a face of $\tau$. An l.f.s.c. $\mathcal{T}$ in $\mathbb{R}^{n}$ is a collection of simplices such that \\
		(i) $\sigma,\tau\in \mathcal{T}$ implies either $\sigma\cap\tau=\phi$ or $\sigma\cap\tau$ is a common face of both $\sigma$ and $\tau$,\\
		(ii) $\tau\in\mathcal{T}$ and $\sigma<\tau$ implies $\sigma$ is a simplex in $\mathcal{T}$ and \\
		(iii) For all $x\in |\mathcal{T}|$, there exists a neighbourhood $U$ of $x$ in $\mathbb{R}^{n}$ which interesects only finitely many simplices of $\mathcal{T}$.
	\end{defn}
	\begin{defn}\label{locally finite}
		Let $X\subset \mathbb{R}^n$ be a path-connected space. Let $\mathcal{T}$ be an l.f.s.c. such that $|\mathcal{T}|=X$ \cite[Chapter I]{Hudson}. Two such l.f.s.c. $\mathcal{T}_1$ and $\mathcal{T}_2$ are said to be \textit{equivalent} if there are subdivisions $\mathcal{T}_1^{'}$ and $\mathcal{T}_2^{'}$ of $\mathcal{T}_1$ and $\mathcal{T}_2$ respectively such that there is a simplicial isomorphism $\psi:\mathcal{T}_1^{'}\rightarrow \mathcal{T}_2^{'}$ and $\psi$ realizes identity of $X$ . It can be readily shown that this relation among the collection of l.f.s.c. of $X$ is an equivalence relation \cite[Theorem 3.6]{Hudson} and we denote the collection of equivalence classes as $\mathbb{T}$. A space $X$ is said to be an l.f.s.c. space if there exists an l.f.s.c. $K$ such that $|K|=X$.
	\end{defn}
	\begin{defn}\label{1.1.2}
		Let us fix an l.f.s.c. $\mathcal{K}_0$ of $X$. We say that a homeomorphism $g$ of $X$ is \textit{piecewise linear with respect to $\mathcal{K}_0$} if there exist two l.f.s.c. $\mathcal{K}_1$ and $\mathcal{K}_2$ of $X$ which are equivalent to $\mathcal{K}_0$ and a simplicial isomorphism $f:(X;\mathcal{K}_1)\rightarrow (X;\mathcal{K}_2)$ that realises $g$, that is $|f|=g$. We denote the set of all such piecewise-linear homeomorphisms with respect to $\mathcal{K}_0$ by $PL(X;\mathcal{K}_0)$.
	\end{defn}
	It can be shown that $PL(X;\mathcal{K}_0)$ is a subgroup of $Homeo(X)$ and if $\mathcal{K}_0$ and $\mathcal{K}_0^{'}$ are equivalent triangulations of $X$, then $PL(X;\mathcal{K}_0)$ and $PL(X;\mathcal{K}_0^{'})$ are the same subsets of $Homeo(X)$. There is a homomorphism from the free product  $\star\displaystyle_{[\mathcal{K}]\in \mathbb{T}}PL(X;\mathcal{K})$ to $Homeo(X)$. We denote the image of this homomorphism as $PL(X)$ in $Homeo(X)$.
	\subsection{$PL_\delta$-homeomorphisms of $X$}
	\begin{defn}\label{PL_delta}
	We say that a homeomorphism $g\in PL(X;\mathcal{K}_0)$ is a \textit{$PL_\delta$-homeomo-rphism with respect to $\mathcal{K}_0$} if there exists a simplicial isomorphism $f:\mathcal{K}_1\rightarrow \mathcal{K}_2$, ($\mathcal{K}_1,\mathcal{K}_2$ are equivalent to $\mathcal{K}_0),~  |g|=f$, and a constant $M>1$ such that for each simplex $\Delta\in \mathcal{K}_1$, 
	$\displaystyle\frac{1}{M}\leq\displaystyle\frac{d(g(x),g(y))}{d(x,y)}\leq M$, for all $x,y\in\Delta$ and $x\neq y$.
	\end{defn}
    This can be shown that $PL_\delta(X;\mathcal{K}_0)$ is a subgroup of $PL(X;\mathcal{K}_0)$ and there is a homomorphism from the free product  $\star\displaystyle_{[\mathcal{K}]\in \mathbb{T}}PL_\delta(X;\mathcal{K})$ to $Homeo(X)$. We denote the image of this homomorphism as $PL_\delta(X)$ in $Homeo(X)$.\\
    
    Now, we need to introduce a metric for an l.f.s.c. space $X$. In an l.f.s.c. space, the property of being path-connected implies any two of its vertices are connected by edges \cite[Chapter 3]{Spanier}. Therefore any two points of $X$ are joined by union of finitely many straight lines $C_1,C_2,\cdots,C_k$ where the end point of $C_i$ is the begining point of $C_{i+1}$, $i=1,2,\cdots,k-1.$
    For a straight line $S$, we define the length $l(S)$ as the distance  between its starting and ending points. Now, we define the length or intrinsic metric $d_p$ on $X$ as follows:\\
    $d_p(x,y)=inf\Big\{\displaystyle\sum_{i=1}^{k}l(C_i)$: $C$ is a path from $x$ to $y$, consisting of finitely many straight lines $C_1,C_2,\ldots,C_k$ for which $C_i(1)=C_{i+1}(0)$, for $i=1,2,\ldots,k-1\Big\}$.\\
    The length metric $d_p$ is said to be equivalent to the Euclidean metric $d$ if there exist two constants $\lambda_1,\lambda_2>0$ such that $\lambda_1d(x,y)\leq d_p(x,y)\leq \lambda_2d(x,y)$, for all $x,y\in X$. Note that for a convex set $X$, both the length metric $d_p$ and the Euclidean metric $d$ are equal.
	\section{Proof of Theorem \ref{1.1}}
	This section presents a combinatorial criterion, specified in Corollary \ref{3.6}, which establishes the conditions under which a piecewise-linear homeomorphism $g$ of a path-connected l.f.s.c. space $X$ becomes a quasi-isometry. Moreover, we show that the first conclusion of Theorem \ref{1.1} directly follows as a consequence of Corollary \ref{3.6}. To prove Corollary 3.6, we establish the connections between $PL_\delta(X)$, $Bilip(X)$ and $QI(X)$.
	\begin{Lem}\label{3.1}
		\textit{Let $X$ be a path-connected l.f.s.c. space with the length metric $d_p$ equivalent to the Euclidean metric $d$. Then  $PL_\delta(X)$ is a subgroup of $Bilip(X)$.}
	\end{Lem}
	\begin{proof}
		It is enough to show that $PL_\delta(X;\mathcal{K}_0)$ is a subgroup of $Bilip(X)$ for each equivalence class $[\mathcal{K}_0]$ in $\mathbb{T}$ by using the universal property of free product of group.\\
		We fix $g\in PL_\delta(X;\mathcal{K}_0)$. Then there exist two l.f.s.c. $\mathcal{K}_1$ and $\mathcal{K}_2$, both equivalent to $\mathcal{K}_0$, and a simplicial isomorphism $f:\mathcal{K}_1\rightarrow \mathcal{K}_2$ such that $|f|=g$ and a constant $M>1$ such that for each simplex $\Delta\in \mathcal{K}_1$, 
		$\displaystyle\frac{1}{M}\leq\displaystyle\frac{d(g(x),g(y))}{d(x,y)}\leq M$, for all $x,y\in\Delta$ and $x\neq y$. Let $x,y\in X$. Then two cases arise:\\
		\textbf{Case I:} There exists a simplex $\Delta\in \mathcal{K}_1$ such that $x,y\in \Delta$. Then for all $x,y\in\Delta,$
		\begin{equation}{\label{p1 eq1}}
		\frac {1}{M}d(x,y)
		\leq d(g(x),g(y))\leq Md(x,y).
		\end{equation}
		\textbf{Case II:} There are two different simplices $\Delta,~\Delta^{'}\in \mathcal{K}_1$ such that $x\in \Delta $, $y\in \Delta^{'}$. \\
	    Let $C$ be a finite piecewise straight line from $x$ to $y$ such that $C=\displaystyle\bigcup_{i=1}^{m}C_i$, where each $C_i$ is a straight line for which $C_1(0)=x$ and $C_m(1)=y$ and $d_p(x,y)\leq \displaystyle\sum_{i=1}^{m}l(C_i)<d_p(x,y)+\epsilon$, for a given $\epsilon>0$.\\
	    We claim that for each $i$, $C_i$ intersects only finitely many simplices of $\mathcal{K}_1$. Suppose, if possible, for some $i$, $C_i$ intersects infinitely many simplices of $\mathcal{K}_1$, then we can form an infinite set $A\subset C_i$ with points from infinitely many mutually distinct simplices. The set $A$ has a limit point, say $\alpha$. Then any neighbourhood of $\alpha$ intersects infinitely many simplices of $\mathcal{K}_1$ which violates locally finiteness of $\mathcal{K}_1$. Thus, our claim is established. Then for each $i=1,2,\cdots,m,$ we can enumerate these simplices as $\Delta_1^{i},\Delta_2^{i},\cdots ,\Delta_{r_i}^{i}$ and find points $a_0^{i},a_1^{i},\cdots,a_{r_i}^{i}\in[0,1]$ such that 
		\\ (i) $a_j^{i}\in C_i^{-1}(\Delta_j^{i})\cap C_i^{-1}(\Delta_{j+1}^{i})$ for all $j=1,2,3,\cdots,r_{i}-1$ and \\(ii) $[a_{j-1}^{i},a_{j}^{i}]\subset C_i^{-1}(\Delta_{j}^{i})$ for all $j=1,2,3,\cdots,r_i$.\\ Then $g\big([a_j^{i},a_{j+1}^{i}]\big)$ is a straight line. Thus,
		\begin{align*}
			d(g(x),g(y))&\leq \sum_{i=1}^{m}\sum_{j=0}^{r_{i}-1}l\big[g(a_j^{i}),g(a_{j+1}^{i})\big]\\
			&\leq M\sum_{i=1}^{m}\sum_{j=0}^{r_{i}-1}l[a_j^{i},a_{j+1}^{i}]\\&=M\Big(\sum_{i=1}^{m}l(C_i)\Big)\\
			&\leq M(d_p(x,y)+\epsilon)\\&\leq M(\lambda_2d(x,y)+\epsilon).
		\end{align*}
	Then from above we get,
	\begin{equation}\label{p1 eq2}
		d(g(x),g(y))\leq M'd(x,y), ~\text{where ~}M'=M\lambda_2.
	\end{equation}
		Again, since $g\in PL_\delta(X;\mathcal{K}_0)$, $g^{-1}$ also belongs to $PL_\delta(X;\mathcal{K}_0)$, so
		\begin{equation}\label{p1 eq3}
			{d(x,y)}\leq M'{d(g(x),g(y))}.
		\end{equation}	
		Combining (\ref{p1 eq1}), (\ref{p1 eq2}) and (\ref{p1 eq3}), we observe that, $g\in Bilip(X)$.
	\end{proof}
	From the fact that a bi-Lipschitz homeomorphism of $X$ is a quasi-isometry of $X$, we can deduce the following corollary:
	\begin{Cor}\label{3.2}
	\textit{Let $X$ be a path-connected l.f.s.c. space with the length metric $d_p$ equivalent to the Euclidean metric $d$. If $g$ is an element of $PL_\delta(X)$, then $g$ is a quasi-isometry of $X$.}
	\end{Cor}
	Subsequently, we present a combinatorial criterion to determine whether an element $g$ of $PL(X)$ belongs to $PL_\delta(X)$. To achieve this, we initially establish a criterion when considering simplicial isomorphisms between two simplices. This particular analysis serves as a tool for proving the result concerning path-connected l.f.s.c. space $X$.
	\begin{Prop}\label{3.3}
		Let $T:\mathbb{R}^{n}\rightarrow\mathbb{R}^{n}$ be a linear isomorphism with a basis $\{e_1,e_2,\\ \cdots,e_n\}$ of $\mathbb{R}^{n}$ and $||e_i||=1$, for all $i=1,2,\cdots,n.$ If there exists $\mu>1$ such that $\displaystyle\frac {1}{\mu}\leq ||T(e_i)||\leq \mu$, then $\displaystyle\frac {1}{M'}\leq \displaystyle\frac {||T(v_1)-T(v_2)||}{||v_1-v_2||}\leq M$ for some constants $M,M'>0$ and for all $v_1,v_2\in\mathbb{R}^{n}$, $v_1\neq v_2$.
	\end{Prop}
\begin{proof}
	Let us consider the continuous function $F:\mathbb{S}^{n-1}\rightarrow\mathbb{R}$ defined by 
	\begin{equation*}
		(x_1,x_2,\cdots,x_n)\mapsto \sum_{i=1}^{n}x_i^2||e_i||^2+2\sum_{1\leq i<j\leq n}x_ix_j<e_i,e_j>=||\sum_{i=1}^{n}x_ie_i||^{2}.
	\end{equation*}
Since $\{e_1,e_2,\cdots,e_n\}$ is a basis, $\displaystyle\sum_{i=1}^{n}x_ie_i$ is always a non-zero vector in $\mathbb{R}^{n}$, so $F$ always assumes positive real numbers. Again since $\mathbb{S}^{n-1}$ is compact, $F$ admits its minimum $M_1$ (for some constant $M_1>0$).\\
Now, if $v=\displaystyle\sum_{i=1}^{n}x_ie_i\in\mathbb{R}^{n}\setminus \{0\}$, then 
\begin{align*}
	&\Big(\displaystyle\frac {x_1}{(\sum_{i=1}^{n}x_i^{2})^{\frac {1}{2}}},\displaystyle\frac {x_2}{(\sum_{i=1}^{n}x_i^{2})^{\frac {1}{2}}},\cdots,\displaystyle\frac {x_n}{(\sum_{i=1}^{n}x_i^{2})^{\frac {1}{2}}}\Big)\in \mathbb{S}^{n-1}\\
	&\implies F\Big(\displaystyle\frac {x_1}{(\sum_{i=1}^{n}x_i^{2})^{\frac {1}{2}}},\displaystyle\frac {x_2}{(\sum_{i=1}^{n}x_i^{2})^{\frac {1}{2}}},\cdots,\displaystyle\frac {x_n}{(\sum_{i=1}^{n}x_i^{2})^{\frac {1}{2}}}\Big)\geq M_1\\&\implies \displaystyle\sum_{i=1}^{m}\Big( \displaystyle\frac {x_1}{(\sum_{i=1}^{n}x_i^{2})^{\frac {1}{2}}}\Big)^2||e_i||^{2}+2\sum_{1\leq i< j\leq n}\frac {x_i}{(\sum_{i=1}^{n} x_i^{2})^{\frac {1}{2}}}\frac {x_j}{(\sum_{i=1}^{n} x_i^{2})^{\frac {1}{2}}}<e_i,e_j>~\geq M_1\\&\implies \displaystyle\sum_{i=1}^{n}x_i^2||e_i||^{2}+2\displaystyle\sum_{1\leq i<j\leq n}x_ix_j<e_i,e_j>~\geq M_1\big(\displaystyle\sum_{i=1}^{n}x_i^{2}\big).
\end{align*}
Hence, for all $v\in \mathbb{R}^{n}\setminus\{0\}$, we have $||v||^{2}\geq M_1\Big(\displaystyle \sum_{i=1}^{n}x_i^{2}\Big)$.\\
Let $v_1,v_2\in\mathbb{R}^{n}$ such that $v_1\neq v_2$ and we wrtie $v_1-v_2=\displaystyle\sum_{i=1}^{n}x_ie_i$. Then,
\begin{align*}
	||T(v_1)-T(v_2)||^{2}&=||\sum_{i=1}^{n}x_iT(e_i)||^{2}\\&\leq \big( \sum_{i=1}^{n}|x_i|^2\big)\Big(\sum_{i=1}^{n}||T(e_i)||^{2}\Big)\\&\leq n^2\mu^{2}\frac {1}{M_1}||v_1-v_2||^2.
\end{align*}
Thus we can write $\displaystyle\frac {||T(v_1)-T(v_2)||}{||v_1-v_2||}\leq M$, where $M=\frac{\mu n}{\sqrt{M_1}}$.\\
Similarly we can deduce the other inequality. \\
This completes the proof.
\end{proof}
\begin{Rem}\label{3.4} Note that in the Prop \ref{3.3}, the constant $M_1$ and hence $M$ can be chosen to be continuous functions of $<e_i,e_j>$ for $1\leq i<j\leq n$ and hence continuous function of $A$.
\end{Rem}
Regarding the definition of $A$, it can be noted that if a maximal simplex is of dimension less tha $n$, we can still include the ordered set of cosines of angles of adjacent edges into $A$ by putting 0's in the remaining components. Prop. \ref{3.3} and Remark \ref{3.4} are still valid in this defintion. The set $B$ can be adjusted similarly. From Prop. \ref{3.3}, the ensuing result establishes a sufficient condition that determines whether a piecewise-linear homeomorphism of $X$ belongs to $PL_\delta(X)$.
\begin{Cor}\label{3.5}
	\textit{Let $X$ be an l.f.s.c. space and $g\in PL(X;\mathcal{K}_{0})$ for which $g=|f|$ where $f:\mathcal{K}_1\rightarrow\mathcal{K}_2$ ($\mathcal{K}_1$ and $\mathcal{K}_2$ are equivalent l.f.s.c. to $\mathcal{K}_{0}$) is a simplicial isomorphism. If
	\begin{enumerate}
		\item [(i)] there exists $\mu>1$ such that $\displaystyle\frac{1}{\mu}\leq\displaystyle\frac{d(g(v_i),g(v_j))}{d(v_i,v_j)}\leq \mu$, where $v_i,v_j$ are any pair of vertices of any simplex of $\mathcal{K}_1$ and $i\neq j$ and
		\item [(ii)] $A$ and $B$ are closed subsets of $\left\{\displaystyle\prod_{\substack{0\leq i<k\leq n\\j\neq i,k}}\mathbb{R}\right\}$ for some ordering of the vertices of simplices of $\mathcal{K}_1$ and $\mathcal{K}_2$,
	\end{enumerate}
then $g\in PL_\delta(X)$.}
\end{Cor}
\begin{proof}
	Let $\Delta^{n}\in\mathcal{K}_1$ and $f(\Delta^{n})\in\mathcal{K}_2$. We denote the vertices of $\Delta^{n}$ by $\{v_0,v_1,\cdots\\,v_n\}$ and so $\{f(v_0),f(v_1),\cdots,f(v_n)\}$ is the set of vertices of $f(\Delta^{n})$. Without loss of generality, we may assume $v_0$ and $f(v_0)$ are origin of $\mathbb{R}^n$. Then, $g|_{\Delta^n}$ is $T|_{\Delta^n}$ where $T:\mathbb{R}^n\rightarrow\mathbb{R}^n$ is a linear isomorphism defined by $T(v_i)=g(v_i)$ for $i=1,2,\cdots,n$. We may assume $e_i=\displaystyle\frac {v_i}{||v_i||}$, $i=1,2,\cdots,n$. Then the condition of Prop \ref{3.3} is satisfied due to given condition (i).\\
	Therefore, $\displaystyle\frac {1}{M_2}\leq \displaystyle\frac {d(g(x),g(y))}{d(x,y)}\leq M_1$, where $x\neq y$ in $\Delta^n$ and $M_1,M_2$ are continuous functions on $A$ and $B$ respectively.\\
	Since $A$ and $B$ are closed subsets of $\left\{\displaystyle\prod_{\substack{0\leq i<k\leq n\\j\neq i,k}}\mathbb{R}\right\}$, so is compact, so they have positive minimum, which gives $M>1$ such that $\displaystyle\frac {1}{M}\leq \displaystyle\frac {d(g(x),g(y))}{d(x,y)}\leq M$, for all $x,y\in\Delta^n$ and for all $\Delta^n\in\mathcal{K}_1$, with $x\neq y$.
\end{proof}
Now, combining above corollary with Corollary \ref{3.2}, we obtain the following sufficient combinatorial criterion.
\begin{Cor}\label{3.6}
	\textit{Let $X$ be a path-connected l.f.s.c. space with the length metric $d_p$ equivalent to the Euclidean metric $d$. Let $g\in PL(X;\mathcal{K}_{0})$ and $g=|f|$ where $f:\mathcal{K}_1\rightarrow\mathcal{K}_2$ ($\mathcal{K}_1$ and $\mathcal{K}_2$ are equivalent to $\mathcal{K}_{0}$) is a simplicial isomorphism. If
	\begin{enumerate}
		\item [(i)] there exists $\mu>1$ such that $\displaystyle\frac{1}{\mu}\leq\displaystyle\frac{d(g(v_i),g(v_j))}{d(v_i,v_j)}\leq \mu$, where $v_i,v_j$ are any pair of vertices of any simplex of $\mathcal{K}_1$ and $i\neq j$ and
		\item [(ii)] $A$ and $B$ are closed subsets of $\left\{\displaystyle\prod_{\substack{0\leq i<k\leq n\\j\neq i,k}}\mathbb{R}\right\}$ for some ordering of the vertices of simplices of $\mathcal{K}_1$ and $\mathcal{K}_2$,
	\end{enumerate}
then $g$ is a quasi-isometry of $X$.}
\end{Cor}
    We are now ready to prove Theorem \ref{1.1}.
	\subsection{Proof of Theorem 1.1}
	As $\mathbb{R}^n$ is a convex space, the length metric $d_p$ is equal to the Euclidean metric $d$. Also note that $\mathbb{R}^n$ is locally compact, so any simplicial complex structures on $\mathbb{R}^n$ is locally finite. Consequently, the first conclusion directly follows from the preceding corollary. To address the second conclusion, we claim a more comprehensive and generalized statement.\\
	Claim: For an l.f.s.c. space $X$ and $\mathcal{K}_0\in \mathbb{T}$, the set 
	$Ker\{PL_\delta(X;\mathcal{K}_0)\rightarrow QI(X)\}$ is equal to the set $\big\{g=|f|~\text{for~some~}f:\mathcal{K}_1\rightarrow\mathcal{K}_2 ~(\mathcal{K}_1 ~\text{and}~ \mathcal{K}_2 ~\text{and}~ \text{equivalent~to~}\mathcal{K}_0):d(g(v_i),v_i)<C,~\text{for~some}~C>0~\text{and ~for~all~vertices~}v_i\in\mathcal{K}_1\big\}$.\\
	Note that $g\in Ker\{PL_\delta(X;\mathcal{K}_0)\rightarrow QI(X)\}$ implies that $d(g(x),x)<C$, for some constant $C>0$ and for all $x\in X$.\\
	For the converse part, let $x$ be a point in the interior of a simplex $\{v_0,v_1,\ldots,v_l\}$ and $g\in PL_\delta(X;\mathcal{K}_0)$ such that $d(g(v_i),v_i)<C$, for some $C>0$ and for all vertices $v_i\in\mathcal{K}_1$. Then there exist $\lambda_i\geq 0$, for $i=1,2,\ldots,l$ such that $x=\displaystyle\sum_{i=1}^{l}\lambda_iv_i$ and $\displaystyle\sum_{i=1}^{l}\lambda_i=1$. Then we can write $g(x)=\displaystyle\sum_{i=1}^{l}\lambda_ig(v_i)$.\\
	Therefore,
	\begin{align*}
		d(g(x),x)&=d\Big(\sum_{i=1}^{l}\lambda_ig(v_i),x\Big)\\&=d\Big(\sum_{i=1}^{l}\lambda_ig(v_i),\sum_{i=1}^{l}\lambda_iv_i\Big)\\&\leq \sum_{i=1}^{l}\lambda_id(g(v_i),v_i)\\&\leq \sum_{i=1}^{l}\lambda_iC=C.
	\end{align*}
Hence our claim is established and this completes the proof.\hspace{3.6cm}\qedsymbol{}\\

Now, we give an example of a path connected l.f.s.c. space $X$ and a simplicial isomorphism $f:X\rightarrow X$ (where both domain and codomain are equipped with the same l.f.s.c. structure) that satisfies the condition (i) of the Corollary \ref{3.6} but not the condition (ii) and also $|f|$ is not a quasi-isometry.
\begin{Ex}
	Let $K\subset \mathbb{R}^{3}$ be a path connected l.f.s.c. consisting of the simplices $\Delta^{n}$ whose vertices are: $v_0^{n}=(n,0,0), ~v_1^{n}=(n+1,0,0), ~v_2^{n}=(n,1,0),~v_3^{n}=(n+\frac {1}{\sqrt{2}},\frac {1}{\sqrt{2}},\frac {1}{n})$ and $K'\subset \mathbb{R}^{3}$ be another path connected l.f.s.c. which consist of the simplices $\Delta^{'n}$ whose vertices are: $u_0^{n}=(-n,0,0),~u_1^{n}=(-n-1,0,0),~u_2^{n}=(-n,-1,0),~u_3^{n}=(-n+\frac {1}{\sqrt{2}},\frac {1}{\sqrt{2}},\frac {1}{n})$, for $n\in\mathbb{N}$. Let $L$ be the 1 simplex from $(-1,0,0)$ to $(1,0,0)$. Let $X$ be the path connected l.f.s.c. defined by $X=K\cup K'\cup L$.\\
	Now, we define the simplicial isomorphism $f:X\rightarrow X$ by $v_i^{n}\mapsto u_i^{n}$, $u_i^{n}\mapsto v_i^{n}$; for all $n\in\mathbb{N}$ and for $i=0,1,2,3$. Clearly we see that for each pair of vertices, there exists a real number $\mu>1$ (here we can take 10) such that $\displaystyle\frac {1}{\mu}\leq \displaystyle\frac {d(f(v_i^{n}),f(v_j^{n}))}{d(v_i^{n},v_j^{n})}\leq \mu$. Again it can be observed that the set $A$, that is the set of cosine of angles of two adjacent edges of $\Delta^{n}$ (and of $\Delta_n^{'}$) is not closed for any order of vertices of $\Delta^{n}$ (and of $\Delta_n^{'}$), since $-\frac {1}{\sqrt{2}}$ is a component of a limit point of $A$ but $-\frac {1}{\sqrt{2}}$ is not appearing in $A$.\\
	Again for two points $x=(n+\frac {1}{2},\frac {1}{2},0)$, $y=(n+\frac {1}{2},\frac {1}{2},\frac {1}{\sqrt{2}n})$ in $\Delta^{n}$, we have\\
	\begin{equation*}
		\frac {d(f(x),f(y))}{d(x,y)}=\frac {d((-n-\frac {1}{2},-\frac {1}{2},0),(-n+\frac {1}{2},\frac {1}{2},\frac {1}{\sqrt{2}n}))}{d((n+\frac {1}{2},\frac {1}{2},0),(n+\frac {1}{2},\frac {1}{2},\frac {1}{\sqrt{2}n}))}=\frac {\sqrt{(2+\frac {1}{2n^2})}}{\frac {1}{2n^2}}\rightarrow\infty~\text{as}~n\rightarrow\infty.
	\end{equation*}
This shows that the map $f$ is not a quasi isometry from $X$ to $X$.
\end{Ex}
\begin{Rem}
	Based on Proof of Theorem \ref{1.1}, we observe that a necessary condition for a function $g$ to belong to the set   $Ker\{PL_\delta(X;\mathcal{K}_0)\rightarrow QI(X)\}$ is that $|d(g(v_i),g(v_j))-d(v_i,v_j)|\leq 2C$, for a pair of vertices $v_i,v_j$ of any simplex of $\mathcal{K}_1$. However, it is important to note that the condition is not sufficient. To illustrate, consider the following example:\\  Let us define a sequence $u_n=n+u_{n-1}$, where $u_0=0$. Then, the function $g:\mathbb{R}\rightarrow\mathbb{R}$ is defined linearly in each case as follows:
	\begin{equation*}
		[-n,-n+1]\mapsto[-n+1,-n+2],~\text{for~all}~n\in\mathbb{N}~\text{and}
	\end{equation*}
\begin{equation*}
	[u_n,u_{n+1}]\mapsto[u_{n+1},u_{n+2}],~\text{for~all~}n\in\mathbb{N}\cup\{0\}.
\end{equation*}
It is clear that $g\in PL_\delta(\mathbb{R})$. Again we see that in the first case, $|d(g(v_i),g(v_j))-d(v_i,v_j)|=0$, and in the second case,  $|d(g(v_i),g(v_j))-d(v_i,v_j)|=1$, based on the defined triangulation of $\mathbb{R}$, and thus, the condition is satisfied. However, it is important to acknowledge that according to the definition of $g$ for $\mathbb{R}_{\geq 0}$, $g$ fails to be quasi-isometrically equivalent to the identity map.
\end{Rem}
	\begin{Rem}\label{3.8}
	The notion of $PL_\delta$-homeomorphism as introduced in Definition \ref{PL_delta} generalizes the concept of $PL_\delta(\mathbb{R})$ defined in \cite{Sankaran}. In the same paper, the author showed that the map from $PL_\delta(\mathbb{R})$ to $QI(\mathbb{R})$ is onto. A similar question arises for $\mathbb{R}^n$ whether the map defined in Theorem \ref{1.1} is surjective. If not, one can further investigate whether the image constitutes a normal subgroup and analyze its quotient.
	\end{Rem}
\section{Center of the group $QI(\mathbb{R}^n)$}\label{4}
	In this section, we prove Theorem \ref{1.2}.
	\subsection{Proof of Theorem \ref{1.2}}
	Let us fix an arbitrary quasi-isometry $f$ of $\mathbb{R}^n$ such that $[f]\neq [id]$. Thus for any $M>0$, there exists $x\in \mathbb{R}^n$ such that $d(f(x),x)\nless M$. Then  we get a sequence $\{a_m\}$ in $\mathbb{R}^n$ where $||a_m||,||f(a_m)||,d(f(a_m),a_m)$ are monotonically increasing sequences of real numbers converging to $\infty$ satisfying $||a_{m+1}||>||f(a_m)||$. \\
	Now we want to obtain a subsequence $\{b_m\}$ of $\{a_m\}$ and a ball $\overline{D_m}$ around $\{f(b_m)\}$ with radius $r_m$ such that\\
	(i) $\{r_m\}$ is a strictly increasing sequence converging to $\infty$,\\
	(ii) the ball $\overline{D_m}$ does not contain any $b_j$ and any $\{f(b_{i})\}$ except for $i=m$, and\\
	(iii) $\overline{D_{i^{'}}}\cap \overline{D_{j^{'}}}=\phi$ for $i'\neq j'$.\\
	Suppose we have got up to $a_{s_m}$ so that a ball $\overline{D_i}$ of radius $r_i$ around $f(a_{s_i})$ does not contain any $a_{s_j}$ for $j\leq i$ and $f(a_{s_j})$ for $j<i$ where $r_i=$ min$\{i,\frac {1}{2}|a_{s_i}-f(a_{s_i})|\}$. Since $a_{s_i}$ and $f(a_{s_i})$ for $i\leq m$ are in a finite ball around the origin, we can choose $a_{s_{m+1}}$ so that a ball $D^{'}_{m+1}$ of radius ${m+1}$ around $f(a_{s_{m+1}})$ does not contain $a_{s_i}$ and $f(a_{s_i})$ for $i\leq m$ and does not intersect $\overline{D_i}$ for $i\leq m$. Then we choose $r_{m+1}$:= min$\{m+1,\frac {1}{2}|f(a_{s_{m+1}})-a_{s_{m+1}}|\}$ and denote the ball with center $f(a_{s_{m+1}})$ and radius $r_{m+1}$ by $\overline{D_{m+1}}$. Then we rename the subsequence $\{a_{s_m}\}$ as $\{b_{m}\}$. \\
	We want to  construct a quasi-isometry $g$ of $\mathbb{R}^{n}$ which will be non identity on each of the ball $\overline{D_m}$ and identity on the complement such that $g$ does not commute with $f$. To do this, we will construct an l.f.s.c. space inside each of the $\overline{D_m}$ and a piecewise-linear homeomorphism of it. \\
	We denote the closed unit ball around the origin by $\overline{\mathbb{D}^1_n}$. We describe
	two simplicial complexes $K$ and $K'$ embedded in $\overline{\mathbb{D}^1_n}$ by specifying its vertices and maximal simplices as follows:\\
	$x_0=(0,0,0,...,1), x_i=(\ubrace{-1,-1,-1,...,-1}{i-1},1,0,0,...,0,-1)/\sqrt{i+1},~1\leq i\leq n-1,\\~x_n=(-1,-1,...,-1,-1)/\sqrt{n}$, $O=(0,0,\ldots,0),~O'=(0,0,\ldots,\frac {1}{4})$.\\
	The simplicial complex $K$ (and $K'$) are defined by subdivisions of the simplex $\{x_0,x_1,\cdots,x_n\}$ after introducing a new vertex $O$ (and $O'$). The maximal simplices of $K$ (and $K'$) are given below.\\
	Let $\Delta_i$ be the simplex with vertices $x_0,x_1,\ldots,x_{i-1},O,x_{i+1},\ldots,x_n$ and $\Delta_i^{'}$ be the simplex with vertices $x_0,x_1,\ldots,x_{i-1},O',x_{i+1},\ldots,x_n$.
	Let $K$ be the simplicial complex with vertices $\{x_0,x_1,\ldots,x_n,O\}$ and maximal simplices $\{\Delta_0,\Delta_1,\ldots,\Delta_n\}$ and $K'$ be the simplicial complex with vertices $\{x_0,x_1,\ldots,x_n,O'\}$ and maximal simplices $\{\Delta_0',\Delta_1',\ldots,\Delta_n'\}$.\\
	Let $h':|K|\rightarrow |K'|$ be the homeomorphism induced by a simplicial map $h':K\rightarrow K'$ defined by $h'(x_i)=x_i,h'(O)=O'$.\\
	Since $K$ and $K'$ are equivalent simplicial complexes of $|K|=|K'|$ and both $K$ and $K'$ consist of only finitely many simplices, so the hypothesis of Corollary \ref{3.6} is satisfied. Therefore, $h':|K|\rightarrow |K'|$ is a quasi-isometry. In fact, there is a constant $k>1$ such that 
	\begin{equation}\label{p1 eq14}
		\frac {1}{k}d(x,y)\leq d(h'(x),h'(y))\leq kd(x,y),~\text{for~all}~ x,y\in |K|.
	\end{equation}
	Since $h'$ is identity on the outer boundary of $|K|$, which consist of the simplices not containing $O$ as a vertex, we can extend $h'$ to a homeomorphism $h:\overline{\mathbb{D}^{1}_n}\rightarrow \overline{\mathbb{D}^1_n}$ defined by 
	\begin{align*}
		h(x)&=h'(x),~\text{for~all~} x\in |K|,\\
		&=x,~\text{for~all~} x\in \overline{\mathbb{D}^1_n}\setminus |K|.
	\end{align*}
	Let $x\in \overline{\mathbb{D}^1_n}\setminus |K|$ and $y\in |K|$, there exist $z\in |K|$ such that the line segments $[x,z]\subset \overline{\mathbb{D}^1_n}\setminus$int$(|K|)$ and the line segment $[z,y]\subset |K|$, so we have
	\begin{align*}
		d(h(x),h(y))&\leq d(h(x),h(z))+d(h(z),h(y))\\
		&\leq k(d(x,z)+d(z,y))\\
		&=kd(x,y).
	\end{align*}
	Thus we get, 
	\begin{equation}\label{p1 eq15}
		\frac {d(h(x),h(y))}{d(x,y)}\leq k, ~\text{for all}~x,y\in \overline{\mathbb{D}^1_n}\setminus |K|.
	\end{equation}
	Similarly, one can easily show,
	\begin{equation}\label{p1 eq16}
		\frac{1}{k}\leq \frac {d(h(x),fh(y))}{d(x,y)},~\text{for all}~x,y\in \overline{\mathbb{D}^1_n}\setminus |K|.
	\end{equation}
	Combining (\ref{p1 eq14}), (\ref{p1 eq15}) and (\ref{p1 eq16}), we obtain that, $h:\overline{\mathbb{D}^1_n}\rightarrow \overline{\mathbb{D}^{1}_n}$ is a quasi-isometry. In fact,
	\begin{align*}
		\displaystyle\frac {1}{k}d(x,y)\leq d(h(x),h(y))\leq kd(x,y)~\text{for~ all }~x,y\in \overline{\mathbb{D}^{1}_n}.
	\end{align*}
	Recall the subsequence $\{b_m\}$ and the ball $\overline{D_m}$ as chosen before.\\
	Now we define $\rho_m:\overline{D_m}\rightarrow \overline{\mathbb{D}^{1}_n}$ by
	\begin{equation*}
		\rho_m(x)=\frac {x-f(b_m)}{r_m},~x\in\overline{D_m}.
	\end{equation*}
	Then we define $g:\mathbb{R}^n\rightarrow \mathbb{R}^n$ by
	\begin{align*}
		g(x)&=(\rho^{-1}_m\circ h\circ \rho_m)(x),~x\in\overline{D_m}\\
		&=x,~x\in \bigcup(\overline{\mathbb{D}_m})^c.
	\end{align*}
	The construction of $g$ has been motivated by ${\Psi(g)}$ in \cite[\textsection 3.2]{Mitra Sankaran}. The proof of the fact that $[g]\in QI(\mathbb{R}^n)$ is similar to the proof of $(ii)$ of Lemma 3.2 in \cite{Mitra Sankaran}.\\ 
	Again we have, 
	\begin{align*}
		g\circ f(b_m)&=(\rho^{-1}_m\circ h\circ \rho_m)(f(b_m))\\
		&=\rho^{-1}_m\circ h(0,0,0,...,0)\\
		&=\rho^{-1}_m(0,0,0,...,\frac {1}{4})\\
		&=f(b_m)+(0,0,0,...,\frac {r_m}{4}) 
	\end{align*}
	and $f\circ g(b_m)=f(b_m).$ So,
	\begin{align*}
		d(f\circ g(b_m),g\circ f(b_m))&=||f\circ g(b_m)-g\circ f(b_m)||\\&=||(0,0,0,\ldots,\displaystyle\frac {r_m}{4})||\rightarrow \infty ~\text{as}~ m\rightarrow \infty.
	\end{align*}
	Hence, $[f]\circ [g]\neq [g]\circ [f]$.\\
	This completes the proof.\hspace{9.8cm}\qedsymbol{}
	

\begin{thebibliography}{99}
		\bibitem{Chakraborty}
		Chakraborty, P.: On the center of the group of quasi-isometries of the real line, \textit{Indian
			J. Pure Appl. Math.}, \textbf{50}(4)(2019), 877–-881.
		\smallskip
		\bibitem{Farb}
		Farb, B.: The quasi-isometry classification of lattices in semisimple Lie groups, \textit{Math. Res. Lett.}, \textbf{4}(1997), 705–-717.
		\smallskip
		\bibitem{Farb Mosher}
		Farb, B. and Mosher, L.: A rigidity theorem for the solvable Baumslag-Solitar groups, With
		an appendix by Daryl Cooper, \textit{Invent. Math.}, \textbf{131}(1998), no. 2, 419-451.
		\smallskip
		\bibitem{Gromov}
		Gromov, M. and Pansu, P.: Rigidity of lattices: an introduction, in: Geometric Topology: Recent Developments, Mon-
		tecatini Terme, 1990, in: Lecture Notes in Math., vol. 1504, Springer, Berlin, 1990, pp. 39–137.
		\smallskip
		\bibitem{Hudson}
		Hudson, J.: Lecture notes on piecewise linear topology, Benjamin, New York
		(1969)
		\smallskip
		\bibitem{Mitra Sankaran}
		Mitra, O. and Sankaran, P.: Embedding certain diffeomorphism groups in the quasi-isometry groups of Euclidean spaces, \textit{Topology Appl.}, \textbf{265}(2019), 11pp.
		\smallskip
		\bibitem{Sankaran}
		Sankaran, P.: On homeomorphisms and quasi-isometries of the real line, \textit{Proc. of the Amer. Math. Soc.}, \textbf{134}(2005) 1875-1880.
		\smallskip
		\bibitem{Spanier}
		Spanier, E.: Algebraic Topology, (Springer-Verlag, New York 1966).
		\smallskip
		\bibitem{Whyte}
		Whyte, K.: The large scale geometry of the higher Baumslag-Solitar groups, \textit{Geom. Funct. Anal.}, \textbf{11}(2001), no. 6, 1327–1343.
	\end{thebibliography}
\end{document}